\newtheorem{theorem}{Theorem}
\newtheorem{lemma}{Lemma}
\begin{document}
\title{Projective Wishart distributions}
\date{}
\author[1]{Emmanuel Chevallier}

\affil[1]{{\small Aix Marseille Univ, CNRS, Centrale Marseille, Institut Fresnel, Marseille 13013, France}}
\maketitle              
\begin{abstract}

We are interested in the distribution of Wishart samples after forgetting their scaling factors. We call such a distribution a projective Wishart distribution. We show that projective Wishart distributions have strong links with the affine-invariant geometry of symmetric positive definite matrices in the real case or Hermitian positive definite matrices in the complex case. First, the Fréchet mean of a projective Wishart distribution is the covariance parameter, up to a scaling factor, of the corresponding Wishart distribution. Second, in the case of $2\times 2$ matrices, the densities have simple expressions in term of the affine-invariant distance.

\end{abstract}

\noindent \textbf{keywords}: Wishart distributions  \and positive definite matrices \and hyperbolic spaces \and Fréchet means.

\section{Introduction}

A Wishart distribution is the law of the empirical second order moment of a set of i.i.d. Gaussian random vectors, see \cite{Wishart} for the original paper of Wishart, or \cite{Muirhead,Kollo} for a more modern presentation. These distributions are parametrized by the covariance $\Sigma$ of the Gaussian and the number $n$ of i.i.d. random vectors. It is well known that when a linear map $A$ acts on the random vectors, a Wishart distribution of parameter $\Sigma$ is turned into a Wishart distribution of parameter $A\Sigma A^{*}$. 

This equivariance property draws a link between Wishart distributions and the affine-invariant geometry of symmetric positive definite and Hermitian positive definite matrices. We are interested here in explicit links between Wishart distributions and affine invariant distance functions.

The link with affine-invariant geometry plays a role in analysis on symmetric cones, see \cite{Terras,Faraut}. In \cite{Terras}, the author shows how the Wishart distribution and its normalizing constant relates to certain integrals on symmetric positive definite matrices. These results rely mostly on the equivariance property itself and the relation between Wishart distributions and the distance function is indirect. 

Authors of \cite{Zhuang} have build an estimator of the parameter $\Sigma$ based on empirical Fréchet means, defined using an affine-invariant distance. In order to obtain a consistent estimator, they need to introduce a multiplicative correcting factor. The underlying reason is that the Euclidean mean $n\Sigma$ and the Fréchet mean of a Wishart distribution do not coincide. 

The link between affine-invariant distances and Wishart distributions appears more clearly when we consider second order moments up to a scaling factor. To do so, we renormalize the second order moments by their determinant and call the corresponding distribution a projective Wishart distribution. These distributions are formally introduced in section \ref{sec:Wishart distributions}. In section \ref{sec:The geometry of S}, we describe the action of invertible linear maps of second order moments and the associated distance function. Section \ref{sec:Properties of projective Wishart distributions} contains the main results. First, the Fréchet mean of a projective Wishart distribution is the covariance parameter, up to a scaling factor, of the corresponding Wishart distribution. Second, in the case of $2\times 2$ matrices, the densities have simple expressions in term of the affine-invariant distance. 

\section{Wishart distributions}
\label{sec:Wishart distributions}
Let $\mathcal{N}_{d}(0,\Sigma)$ denote the Gaussian distribution on column vectors of $K^d$ with $K=\mathbb{R}$ or $K=\mathbb{C}$, whose mean is $0$ and covariance is $\Sigma$. Note $\mathcal{P}_d(K)$ the set of $d\times d$ symmetric positive definite matrices when $K=\mathbb{R}$ and the set of Hermitian positive definite matrices when $K=\mathbb{C}$. In the rest of the paper, $\Sigma \in \mathcal{P}_d(K)$. Let $(Y_i)_{i\in \mathbb{N}}$ be a sequence of i.i.d. random variables with

$$ Y_i \sim  \mathcal{N}_{d}(0,\Sigma).$$

For $n\in \mathbb{N}$, consider the random variable

$$ X_n = \sum_{i=1}^{i=n} Y_i Y_i^T $$ 

The Wishart distribution with parameters $\Sigma$ and $n$, noted $\mathcal{W}(\Sigma,n)$, is defined as the law of the random variable $X_n$. When $n\geq d$, $X_n\in\mathcal P_d(K)$ almost surely and the distribution $\mathcal{W}(\Sigma,n)$ is supported on $\mathcal{P}_d(K)$.

Our aim is to study covariance matrices up to a scaling factor. Hence, for $X\in \mathcal{P}_d(K)$, consider the equivalence classes 

$$ \bar X = \{ \alpha X, \alpha \in \mathbb{R}_{>0} \},$$
and denote $\mathcal{S}=\mathcal{P}_d(K)/\mathbb{R}_{>0}$ the set of equivalence classes. In the rest of the paper $X$ denotes an element of $\mathcal{P}_d(K)$ and $x$ an element of $\mathcal{S}$. There exists several standard parametrizations of $\mathcal{S}$, such as matrices of $\mathcal{P}_d(K)$ of fixed trace or matrices of fixed determinant. In the rest of the paper, we identify $\mathcal{S}$ with matrices of determinant $1$,

$$ \mathcal{S} \sim \{ X \in \mathcal{P}_d(K), \det(X)=1 \}.$$

Note $\pi$ the canonical projection 
$$\pi(X)=\frac{1}{\det(X)^d}X.$$ 

We are interested in the law $P\mathcal{W}(\Sigma,n)$ of $\pi(X)$ when $X$ follows a Wishart distribution $\mathcal{W}(\Sigma,n)$. Distributions $P\mathcal{W}$ will be called projective Wishart distributions.  

In the next sections, we will study properties of such distributions. In particular, we will show that $\pi(\Sigma)$ is the Frechet mean of $P\mathcal{W}(\Sigma,n)$ for the affine-invariant metric on $\mathcal{S}$. We will also show that when $d=2$, the density of $P\mathcal{W}(\Sigma,n)$ evaluated in $x$ has a simple expression depending on the distance between $x$ and $\bar \Sigma$.

\section{The geometry of $\mathcal{S}$}
\label{sec:The geometry of S}

The space $\mathcal{P}_d(K)$ is an open cone of the vector space of (Hermitian-)symmetric matrices. This cone is invariant by the following action of invertible matrices

$$ G \cdot X = GXG^{*}, $$
where $G\in GL_d(K)$, $X\in \mathcal{P}_d(K)$, and where $A^*$ refers to the transpose or the conjugate transpose of $A$.

Let denote $SL_d(K)$ the subset of $GL_d(K)$ of matrices with determinant $1$. The action of $G\in SL_d(K)$ preserves the determinant:

$$ \det(G \cdot X) = \det(X). $$
Hence $SL_d(K)$ preserves $\mathcal{S}$. The so called affine-invariant on $\mathcal{S}$ is defined up to a multiplicative constant by

\begin{equation}
\label{eq:dist}
d(x,y) \propto \| \log(x^{-\frac{1}{2}}yx^{-\frac{1}{2}}) \|,    
\end{equation}

see \cite{Bhatia,Pennec,Thanwerdas}.

This distance $d(.,.)$ will be used to define the Fréchet mean. In the particular case $d=2$, $\mathcal{P}_d(K)$ endowed with the distance $d(.,.)$ becomes a hyperbolic space of dimension $2$ when $K=\mathbb{R}$ or $3$ when $K=\mathbb{C}$. The property of isotropy of hyperbolic spaces enable to express the density of $P\mathcal{W}(\Sigma,n)$ as a function of $d(.,.)$.

\section{Properties of projective Wishart distributions}
\label{sec:Properties of projective Wishart distributions}

In order to prove the results on Fréchet mean and on densities, we need to state two lemmas. 
The first lemma states that the projective Wishart distribution $P\mathcal{W}(\Sigma,n)$ is invariant by a certain subgroup, noted $H_{\Sigma}$, of isometries of $d(.,.)$ which fix $\Sigma$. The second lemma states that $\bar \Sigma$ is the only fixed point of $H_{\Sigma}$ in $\mathcal{S}$. \\

Note $H$ the subset of matrices $R\in SL_d(K)$ with $RR^{*}=I$. When $K=\mathbb{R}$, $H$ is the special orthogonal group $SO_d$ and when $K=\mathbb{C}$, $H$ is the special unitary group $SU_d$. Define $H_{\Sigma}\subset SL_d(K)$ as the set of matrices 

$$R_{\Sigma} = \Sigma^{\frac{1}{2}}R\Sigma^{-\frac{1}{2}}$$

with $R\in H$. Matrices of $H_{\Sigma}$ leave $\Sigma$ stable. Note $R_{\Sigma} \cdot P\mathcal{W}(\Sigma,n)$ the action of $R_{\Sigma}$ on the distribution $P\mathcal{W}(\Sigma,n)$:

$$ R_{\Sigma} \cdot P\mathcal{W}(\Sigma,n)(R_{\Sigma} \cdot A) = P\mathcal{W}(\Sigma,n)(A)$$
for all measurable subset $A$ of $\mathcal{S}$. As $R_{\Sigma}$ leaves $\mathcal{S}$ stable, $R_{\Sigma} \cdot P\mathcal{W}(\Sigma,n)$ is still a distribution on  $\mathcal{S}$. State the first lemma.

\begin{lemma}
\label{lm:invariance}

$\forall R_{\Sigma}\in H_{\Sigma}$,

$$ R_{\Sigma} \cdot P\mathcal{W}(\Sigma,n) = P\mathcal{W}(\Sigma,n).$$

\end{lemma}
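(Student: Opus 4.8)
The plan is to lift the claimed invariance on $\mathcal{S}$ back to an invariance of the underlying Wishart distribution on $\mathcal{P}_d(K)$, by chaining together the equivariance property recalled in the introduction with the compatibility of the projection $\pi$ with the $SL_d(K)$ action. The whole argument rests on the fact that $R_{\Sigma}$ fixes $\Sigma$, already announced in the statement.

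First I would record the equivariance of the Wishart law itself. If $X\sim\mathcal{W}(\Sigma,n)$ and $G\in GL_d(K)$, then
$$ G\cdot X = GXG^{*} = \sum_{i=1}^{n}(GY_i)(GY_i)^{*}, $$
and since $GY_i\sim\mathcal{N}_d(0,G\Sigma G^{*})$, this shows $G\cdot X\sim\mathcal{W}(G\Sigma G^{*},n)$. Next I would check that $\pi$ intertwines the $SL_d(K)$ action on $\mathcal{P}_d(K)$ with the descended action on $\mathcal{S}$. For $G\in SL_d(K)$ one has $\det(GXG^{*})=\det(X)$, so the definition of $\pi$ yields $\pi(G\cdot X)=G\cdot\pi(X)$. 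In particular, writing $x=\pi(X)$, whose law is $P\mathcal{W}(\Sigma,n)$, we get $R_{\Sigma}\cdot x=\pi(R_{\Sigma}\cdot X)$ for every $R_{\Sigma}\in H_{\Sigma}\subset SL_d(K)$.

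The crux is then the one-line computation that $R_{\Sigma}$ fixes $\Sigma$: with $R_{\Sigma}=\Sigma^{\frac12}R\Sigma^{-\frac12}$, $R_{\Sigma}^{*}=\Sigma^{-\frac12}R^{*}\Sigma^{\frac12}$ and $RR^{*}=I$,
$$ R_{\Sigma}\Sigma R_{\Sigma}^{*} = \Sigma^{\frac12}R\Sigma^{-\frac12}\Sigma\Sigma^{-\frac12}R^{*}\Sigma^{\frac12} = \Sigma^{\frac12}RR^{*}\Sigma^{\frac12} = \Sigma. $$
Combining the three steps, $R_{\Sigma}\cdot X\sim\mathcal{W}(R_{\Sigma}\Sigma R_{\Sigma}^{*},n)=\mathcal{W}(\Sigma,n)$, so $R_{\Sigma}\cdot x=\pi(R_{\Sigma}\cdot X)$ has the same law $P\mathcal{W}(\Sigma,n)$ as $x$. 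By the definition of the pushforward action this is precisely $R_{\Sigma}\cdot P\mathcal{W}(\Sigma,n)=P\mathcal{W}(\Sigma,n)$.

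I do not expect a serious obstacle here, as the proof is a clean composition of equivariances. The only points requiring care are bookkeeping the direction of the pushforward in the definition of $R_{\Sigma}\cdot P\mathcal{W}(\Sigma,n)$, and confirming that it is the $SL_d(K)$ action, rather than the full $GL_d(K)$ action, that descends to $\mathcal{S}$ — which is guaranteed precisely because $H_{\Sigma}\subset SL_d(K)$ preserves the determinant and hence leaves the fibers of $\pi$ intact.
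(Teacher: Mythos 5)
Your proof is correct and follows essentially the same route as the paper: the paper's proof invokes the well-known invariance $R_{\Sigma}\cdot\mathcal{W}(\Sigma,n)=\mathcal{W}(\Sigma,n)$ and then pushes it through $\pi$ using the commutation of $\pi$ with the group action, exactly as you do. The only difference is that you spell out the details the paper leaves to the reader — deriving the Wishart invariance from the Gaussian equivariance $G\cdot X\sim\mathcal{W}(G\Sigma G^{*},n)$ together with the computation $R_{\Sigma}\Sigma R_{\Sigma}^{*}=\Sigma$ — which is precisely what the paper means by ``easily checked from their definition.''
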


\begin{proof}
The invariance for Wishart distributions, namely $ R_{\Sigma} \cdot \mathcal{W}(\Sigma,n) = \mathcal{W}(\Sigma,n), $ is well known and can easily be checked from their definition, see \cite{Muirhead,Kollo}. Projective Wishart distributions are images of Wishart distributions by the projection $\pi$. Since the projection $\pi$ commutes the the action of $Gl_d(K)$, the invariance also holds for projective Wishart distributions.
\end{proof}

\begin{lemma}
\label{lm:fixPoint}

$\bar\Sigma=\pi(\Sigma)$ is the only fixed point of $H_{\Sigma}$ in $\mathcal{S}$.

\end{lemma}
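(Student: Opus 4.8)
The plan is to reduce everything to the case $\Sigma = I$ and then exploit the irreducibility of the standard action of $H$ on $K^d$. First I would record that $\bar\Sigma$ is genuinely fixed: for $R\in H$ one has $R_\Sigma\cdot\Sigma = \Sigma^{1/2}RR^{*}\Sigma^{1/2}=\Sigma$, which is already observed in the text, so only uniqueness needs proof. For the reduction, note that since $\pi$ commutes with the $GL_d(K)$ action, the matrix $g=\Sigma^{1/2}$ acts on $\mathcal{S}$, and by construction $R_\Sigma = gRg^{-1}$ acts as the conjugated isometry $g\circ R\circ g^{-1}$. Consequently a point $x$ is fixed by every element of $H_\Sigma$ if and only if $y:=g^{-1}\cdot x$ is fixed by every element of $H$. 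Because $g^{-1}\cdot\bar\Sigma=\bar I$, it suffices to prove that $\bar I$ is the unique point of $\mathcal{S}$ fixed by $H$.

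Next I would pass from the equivalence class to an honest matrix identity. Let $X$ be the determinant-one representative of such a fixed point $y$. Since $R\in H\subset SL_d(K)$ preserves the determinant, $RXR^{*}$ again has determinant one, so the relation $\overline{RXR^{*}}=\bar X$ forces $RXR^{*}=\alpha X$ with $\alpha>0$ and $\alpha^{d}=1$, hence $\alpha=1$ and $RXR^{*}=X$. As $RR^{*}=I$, this is exactly $RX=XR$: the representative $X$ commutes with every element of $H$.

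The crux is then a representation-theoretic observation. Being Hermitian (or symmetric), $X$ is diagonalizable with mutually orthogonal eigenspaces, and each eigenspace is invariant under any matrix commuting with $X$, hence under all of $H$. The key input is that the standard representation of $H$ on $K^{d}$ is irreducible: $H$ acts transitively on the unit sphere of $K^{d}$, so no proper nonzero subspace is $H$-invariant. For $K=\mathbb{C}$ this is Schur's lemma for $SU_d$, and for $K=\mathbb{R}$ with $d\geq 2$ it holds because $SO_d$ moves every line (for $d=2$ a generic rotation sends any given line to a different one). I expect this irreducibility to be the only nonformal ingredient; everything else is the determinant bookkeeping and the conjugation reduction above. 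It forces $X$ to have a single eigenspace, namely all of $K^{d}$, so $X=\lambda I$, and the determinant-one condition gives $\lambda=1$, i.e. $y=\bar I$.

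Finally, transporting this conclusion back through $g=\Sigma^{1/2}$ shows that $\bar\Sigma$ is the unique fixed point of $H_\Sigma$ in $\mathcal{S}$, which is the claim.
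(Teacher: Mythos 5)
Your proof is correct, and it shares the paper's overall skeleton---reduce to $\Sigma=I$ by conjugating with $g=\Sigma^{1/2}$, turn the fixed-point condition into the commutation relation $RX=XR$ for all $R\in H$, and conclude that $X$ is scalar---but the crux is handled by a genuinely different mechanism. The paper argues directly on lines: for each $u\in K^d$ it picks two elements $R_1,R_2\in H$ fixing $u$ whose \emph{common} fixed-point set is exactly $Ku$ (two are needed because for $K=\mathbb{R}$ and $d$ even no single element of $SO_d$ has a line as its exact fixed set), and uses $R_i(X(u))=X(R_i(u))=X(u)$ to conclude that every vector is an eigenvector of $X$, hence $X=cI$. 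You instead invoke the spectral theorem: the eigenspaces of the symmetric (Hermitian) matrix $X$ are invariant under everything commuting with $X$, hence $H$-invariant, and irreducibility of the standard representation of $H$ (transitivity on the unit sphere) forces a single eigenspace. Your route avoids the explicit construction of $R_1,R_2$ and its parity subtlety, and it correctly sidesteps the failure of naive Schur's lemma over $\mathbb{R}$ (e.g.\ $J=\left(\begin{smallmatrix}0&-1\\1&0\end{smallmatrix}\right)$ commutes with all of $SO_2$ but is not scalar) by exploiting the symmetry of $X$, which is exactly the hypothesis that makes the eigenspace argument work. You are also more careful than the paper on one bookkeeping point: a fixed point in $\mathcal{S}$ a priori only gives $RXR^*=\alpha X$ for some $\alpha>0$, and your determinant argument ($\alpha^d=1$, hence $\alpha=1$) cleanly closes the gap between fixed equivalence classes and fixed matrices, a step the paper's proof passes over silently.
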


\begin{proof}

Consider first a matrix $X\in \mathcal{P}_d(K)$ fixed by $H$.  Since

$$R X R^* = X \Leftrightarrow R X  = X R,$$
the matrix $X$ preserves the set of fixed points of all matrices $R\in H$. Furthermore, for all $u\in K^d$, there exist  $R_1$ and $R_2\in H$ such that if $R_1(v)=R_2(v)=v$ then $v\in Ku$ (if $K=\mathbb R$ and if $d$ is even, there is no $R\in H$ such that $Ku$ is the set of fixed points). Now $R_i(X(u))=X(R_i(u))=X(u)$, $i=1,2$, hence $X(u)\in Ku$. It follows that $X$ leaves all lines stable, which in turn implies  that $ X = cI$ for some $c\in \mathbb{R}_{>0}$.

If $X$ is fixed by $H_{\Sigma}$ then $\Sigma^{-\frac{1}{2}}\cdot X$ if fixed by $H$. Hence there is $c\in \mathbb{R}_{>0}$ such that $\Sigma^{-\frac{1}{2}}\cdot X = cI$, and $X=c\Sigma$. This proves that $\bar \Sigma$ is the only fixed point in $\mathcal{S}$.
    
\end{proof}

\subsection{Fréchet means}

The Fréchet mean of a probability distribution $\mu$ on $\mathcal{S}$ can be defined as

$$ F(\mu) =  \operatorname{argmin}_x \int_{\mathcal{S}} d(x,y)^2\mathrm{d}\mu(y) $$

\cite{Afsari,Arnaudon}.

\begin{theorem}
Let $\Sigma\in \mathcal{P}_d(K)$. The Fréchet mean of $P\mathcal{W}(\Sigma,n)$ is unique and equals to $\bar \Sigma$ : 
$$ F(P\mathcal{W}(\Sigma,n)) = \frac{1}{\det{\Sigma}^d}\Sigma $$
\end{theorem}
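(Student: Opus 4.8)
The plan is to derive the result from the two lemmas via the equivariance and uniqueness of Fréchet means on a nonpositively curved space. Write $\mu = P\mathcal{W}(\Sigma,n)$ and recall that $(\mathcal S, d)$ is the Riemannian symmetric space $SL_d(K)/H$, hence a Hadamard manifold: complete, simply connected and of nonpositive sectional curvature. On such a space $x\mapsto d(x,y)^2$ is strictly geodesically convex for each fixed $y$, so the energy $V(x)=\int_{\mathcal S} d(x,y)^2\,\mathrm d\mu(y)$ is strictly convex wherever it is finite. The first step is thus to secure that $V$ is finite and proper, i.e. that $\mu$ has a finite second moment for $d$; this reduces to a moment bound on the Wishart law and, together with the cited results \cite{Afsari,Arnaudon}, guarantees that the minimizer $F(\mu)$ exists and is a single point of $\mathcal S$.

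The core step is the equivariance of the Fréchet mean under any isometry $\phi$ of $(\mathcal S,d)$. Using the definition of $\phi\cdot\mu$ as a pushforward together with $d(\phi(x),\phi(y))=d(x,y)$,
$$ \int_{\mathcal S} d(\phi(x),y)^2\,\mathrm d(\phi\cdot\mu)(y) = \int_{\mathcal S} d(\phi(x),\phi(y))^2\,\mathrm d\mu(y) = \int_{\mathcal S} d(x,y)^2\,\mathrm d\mu(y), $$
so $\phi$ carries the minimizer of the energy of $\mu$ onto that of $\phi\cdot\mu$, giving $F(\phi\cdot\mu)=\phi\cdot F(\mu)$ whenever both means are unique. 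Each $R_\Sigma\in H_\Sigma$ acts on $\mathcal S$ as an isometry of $d$, as noted in the construction of $H_\Sigma$, so I may take $\phi=R_\Sigma$.

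It remains to combine this with the lemmas. By Lemma \ref{lm:invariance}, $R_\Sigma\cdot\mu=\mu$ for all $R_\Sigma\in H_\Sigma$, so the equivariance yields
$$ F(\mu) = F(R_\Sigma\cdot\mu) = R_\Sigma\cdot F(\mu). $$
Hence the unique point $F(\mu)$ is fixed by every element of $H_\Sigma$. By Lemma \ref{lm:fixPoint} the only such fixed point in $\mathcal S$ is $\bar\Sigma=\pi(\Sigma)$, which gives $F(\mu)=\bar\Sigma$ and, writing $\pi$ explicitly, the stated identity.

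The main obstacle is not the equivariance bookkeeping, which is routine, but the two analytic facts underpinning the very existence of the argmin: finiteness of the second moment of $\mu$ (so that $V\not\equiv+\infty$) and, above all, uniqueness of the minimizer. Uniqueness is essential here, since an $H_\Sigma$-invariant minimizing set need not reduce to a single fixed point; it is exactly the strict convexity of $V$ coming from nonpositive curvature that forces the minimizer to be unique and thereby lets the single fixed point of Lemma \ref{lm:fixPoint} pin down the mean. I would therefore spend the most care verifying that $\mu$ falls within the hypotheses of \cite{Afsari} guaranteeing a unique Fréchet mean.
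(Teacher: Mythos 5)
Your argument follows the same route as the paper's proof: equivariance of the Fr\'echet mean under isometries, Lemma \ref{lm:invariance} to make $F(\mu)$ invariant under $H_\Sigma$, and Lemma \ref{lm:fixPoint} to identify the invariant point with $\bar\Sigma$. In one respect you are more careful than the paper: you observe that without uniqueness, $H_\Sigma$-invariance of the minimizing \emph{set} would not force that set to consist of fixed points (an orbit is invariant without being fixed), and you close this by the strict convexity of $x\mapsto d(x,y)^2$ on the Hadamard manifold $\mathcal S$; the paper passes over this point when it deduces $F(P\mathcal{W}(\Sigma,n))\subset\{\bar\Sigma\}$ directly from set invariance.

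The one place where your proposal stops short of a complete proof is the step you yourself flag: finiteness of the energy, i.e. that $a(x)=\int_{\mathcal S}d(x,y)^2\,\mathrm{d}\mu(y)<\infty$ for some $x$. You assert that this ``reduces to a moment bound on the Wishart law'' but do not prove it, and this is exactly where the paper does its only concrete computation. The paper takes $\Sigma=I$ (equivariance transfers the conclusion to general $\Sigma$), writes, for $X\sim\mathcal W(I,n)$ with eigenvalues $\lambda_1,\dots,\lambda_d$,
$$ d^2(\pi(X),I)\;\propto\; \sum_i \log^2\!\left(\lambda_i \Big/ \textstyle\prod_j\lambda_j^{1/d}\right), $$
and integrates this against the joint eigenvalue density, proportional to $\left(\prod_i\lambda_i^{k_1}\right)e^{-\frac12\sum_i\lambda_i}\prod_{i>j}|\lambda_i-\lambda_j|^{k_2}$; the integral converges because the exponential factor dominates the logarithmic growth at infinity, while $\log^2\lambda$ is integrable against $\lambda^{k_1}\mathrm{d}\lambda$ near $0$. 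Supplying this estimate (or an equivalent bound on the log-eigenvalue moments of a Wishart matrix) is what is needed to make your existence step rigorous; once that is in place, the rest of your argument goes through essentially verbatim and coincides with the paper's.
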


\begin{proof}
Prove first the equivariance of the Fréchet mean with respect to isometries. Let $\varphi$ be an isometry of $d(.,.)$. Let $\mu$ be any probability measure on $\mathcal S$. Note $\varphi_*\mu$ the pushforward of $\mu$, defined as $\varphi_*\mu(A)=\mu(\varphi^{-1}(A))$. Since

$$\int_{\mathcal{S}} d(x,y)^2\mathrm{d}\mu(y)=\int_{\mathcal{S}} d(x,\varphi^{-1}(y))^2\mathrm{d}\varphi_*\mu(y)=\int_{\mathcal{S}} d(\varphi(x),y)^2\mathrm{d}\varphi_*\mu(y),$$
it can be checked that $\varphi(F(\mu))=F(\varphi_*\mu)$. Using lemma \ref{lm:invariance}, the equivariance of the mean, and that $SL_d(K)$ is acting by isometries, we see that 

$$\forall   R_{\Sigma}\in H_{\Sigma},\, R_{\Sigma}\cdot F(P\mathcal{W}(\Sigma,n)) = F(R_{\Sigma} \cdot P\mathcal{W}(\Sigma,n)) = F(P\mathcal{W}(\Sigma,n))$$

from which we deduce with Lemma \ref{lm:fixPoint} that $F(P\mathcal{W}(\Sigma,n)) \subset \{\bar \Sigma \}$: if the mean exists, it is $\bar \Sigma$. In order to show that a Fréchet mean exists, we need to show that 

$$ a(x)=\int_{\mathcal{S}} d(x,y)^2\mathrm{d}\nu(y) $$

is finite for at least one $x\in \mathcal{S}$. Consider the distribution $P\mathcal{W}(I,n)$ and show that $a(I)$ is finite. An equivariance argument can then transfer the reasoning to $P\mathcal{W}(\Sigma,n)$ and $a(\bar \Sigma)$. Note $\lambda_i\in \mathbb{R}_{+}$ the eigenvalues of $X\in \mathcal{P}_d(K)$. A calculation shows that using the distance of Eq (\ref{eq:dist}),

$$d^2(\bar X,I)\propto \sum_i \log^2\left(\frac{\lambda_i}{\prod_j \lambda_j^{\frac{1}{d}}}\right)= \left(\sum_i \log^2(\lambda_i)-\frac{1}{d}\log^2\left(\prod_i\lambda_i\right)\right).$$

From the marginal distribution of the $\lambda_i$ when $X\sim \mathcal{W}(I,n)$, see \cite{Kollo,James}, we deduce that

$$a(I) \propto \int_{\mathbb{R}_+^d}  \left(\sum_i \log^2(\lambda_i)-\log^2\left(\prod_i\lambda_i\right)\right)\left(\prod_i \lambda_i^{k_1}\right) e^{-\frac{1}{2}\sum_i \lambda_i} \prod_{i>j} |\lambda_i - \lambda_j|^{k_2}\mathrm{d}\lambda,$$

where $k_1,k_2$ are positive constants depending on the case $K=\mathbb{R}$ or $K=\mathbb{C}$. Since the $\lambda_i$ are positive, the exponential terms dominate the other terms and the integral converges.

\end{proof}

\subsection{Densities in the $2\times 2$ case}

Let us start  by defining densities on $\mathcal{S}$. It can be proved that up to a multiplicative factor, there exists a unique volume measure $\mathcal{S}$ invariant by the action of $SL_d(K)$. Note $\nu$ such a measure. The density of the probability distribution $P\mathcal{W}(\Sigma,n)$ is defined as a function $f_{P\mathcal{W}}(.;\Sigma,n):\mathcal{S}\rightarrow \mathbb{R}$ such that for any measurable subset $A$ of $\mathcal{S}$

$$ P\mathcal{W}(\Sigma,n)(A) = \int_{A} f_{P\mathcal{W}}(x;\Sigma,n)d\nu(x).$$

To simplify notations, we will only write the parameters $\Sigma$ and $n$ when it is necessary. Since both $P\mathcal{W}(\Sigma,n)$ and $\nu$ are invariant by the action of $H_{\Sigma}$, it can be checked that the density $f$ is also invariant:

$$\text{ for }\nu \text{-almost all } x\in\mathcal S, f_{P\mathcal{W}}(R_{\Sigma}\cdot x)=f(x).$$     

It is easy to prove that the density $f_{P\mathcal{W}}$ can be chosen continuous. In that case, the equality holds for all $x$.

Hence the density $f_{P\mathcal{W}}$ is constant on the orbits of the action of $H_{\Sigma}$. Since $H_{\Sigma}$ is acting by isometries which fix $\bar \Sigma$, the orbits are contained in balls of center $\bar \Sigma$ of fixed radius. When $d=2$, orbits of $H_{\Sigma}$ are the full balls. Indeed, as mentioned in section \ref{sec:The geometry of S}, in that case, $\mathcal{S}$ is an hyperbolic space of dimension $2$ when $K=\mathbb{R}$ and an hyperbolic space of dimension $3$ when $K=\mathbb{C}$. In these cases,
 $\mathcal{S}$ is not only a symmetric space but also an isotorpic space. We have the additional property that 

$$ d(x,\bar\Sigma)=d(y,\bar\Sigma) \implies \exists R_{\Sigma}\in H_{\Sigma}, R_{\Sigma} \cdot x = y .$$
Hence, we have the following theorem.

\begin{theorem}
\label{th:density}
The density $f_{P\mathcal{W}}(x ;\Sigma,n)$ can be factored through a function $h_{\Sigma,n}:\mathbb{R}_+\rightarrow \mathbb{R}_+$,

$$f_{P\mathcal{W}}(x;\Sigma,n)=h_{\Sigma,n}(d(x,\bar\Sigma)), $$ 

where $\bar\Sigma=\frac{1}{\det(\Sigma)^d}\Sigma$.
\end{theorem}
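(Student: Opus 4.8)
The plan is to combine the invariance of the continuous density under $H_\Sigma$, already established above, with an identification of the $H_\Sigma$-orbits as the metric spheres centered at $\bar\Sigma$. The crux is to show that the level sets of the map $x \mapsto d(x,\bar\Sigma)$ coincide exactly with the orbits of $H_\Sigma$; once this is in place, factoring the density through the distance is immediate.

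First I would record the two inclusions relating orbits and spheres. For the inclusion of an orbit into a single sphere, I would use that each $R_\Sigma \in H_\Sigma \subset SL_d(K)$ acts by an isometry of $d(.,.)$ and fixes $\bar\Sigma$ (Lemma \ref{lm:fixPoint}), so that $d(R_\Sigma \cdot x, \bar\Sigma) = d(R_\Sigma \cdot x, R_\Sigma \cdot \bar\Sigma) = d(x,\bar\Sigma)$; hence every orbit lies in one sphere. For the reverse inclusion, that each sphere is a single orbit, I would invoke the isotropy property displayed just before the statement, valid precisely because for $d=2$ the space $\mathcal S$ is the hyperbolic plane (real case) or hyperbolic $3$-space (complex case): if $d(x,\bar\Sigma)=d(y,\bar\Sigma)$ then some $R_\Sigma \in H_\Sigma$ sends $x$ to $y$. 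Together these give that two points lie in the same $H_\Sigma$-orbit if and only if they are equidistant from $\bar\Sigma$.

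Next, since the continuous version of $f_{P\mathcal{W}}$ is constant on $H_\Sigma$-orbits, it is therefore constant on the spheres centered at $\bar\Sigma$. I would define $h_{\Sigma,n}(r)$ to be the common value of $f_{P\mathcal{W}}$ on the sphere $\{x \in \mathcal S : d(x,\bar\Sigma)=r\}$; the orbit/sphere identification guarantees this is well defined, and completeness of the hyperbolic space $\mathcal S$ ensures that $x \mapsto d(x,\bar\Sigma)$ is surjective onto $\mathbb{R}_+$, so $h_{\Sigma,n}$ is defined on all of $\mathbb{R}_+$ and $f_{P\mathcal{W}}(x;\Sigma,n)=h_{\Sigma,n}(d(x,\bar\Sigma))$ as claimed.

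The only genuine obstacle is the reverse inclusion, i.e. the transitivity of $H_\Sigma$ on each sphere. This is exactly where the restriction to $d=2$ is essential: for larger $d$ the orbits of $H_\Sigma$ form a strictly lower-dimensional family than the spheres, and the density cannot in general be recovered from the distance alone. As the isotropy statement is already granted in the text for $d=2$, the remaining work reduces to the bookkeeping above.
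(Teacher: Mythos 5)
Your proposal is correct and follows essentially the same route as the paper: invariance of the (continuous) density under $H_\Sigma$, plus the identification of $H_\Sigma$-orbits with metric spheres centered at $\bar\Sigma$ via the isotropy of the hyperbolic space when $d=2$. The paper's own argument is exactly this, stated in the paragraphs preceding the theorem; your version merely makes the two inclusions and the well-definedness of $h_{\Sigma,n}$ explicit.
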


As we will see later in the explicit calculation of $f_{P\mathcal{W}}$, $h_{\Sigma,n}$ does not depend on $\Sigma$. This follows from the transitivity of the action of $GL_d(K)$ on $\mathcal{P}_d(K)$ and the commutation relations
$G\cdot \mathcal{W}(\Sigma,n)=\mathcal{W}(G\cdot \Sigma,n)$ and $G \cdot \pi(X) = \pi(G\cdot X)$.

To compute the density $f_{P\mathcal{W}}$ of $P\mathcal{W}(\Sigma,n)$, we will integrate the density of $\mathcal{W}(\Sigma,n)$ along fibers $\pi^{-1}(x)$ of the projection $\pi$. 
Although  we introduced the measure $\nu$ on $\mathcal{S}$, we shall need a reference measure on the entire set $\mathcal{P}_d(K)$ to define the density of $\mathcal{W}(\Sigma,n)$.
Note first that the determinant of a matrix in $X\in \mathcal{P}_d(K)$ is always a positive real number since the eigenvalues of $X$ are real and positive. Consider the following identification between $\mathcal{P}_d(K)$ and the cartesian product $\mathcal{S} \times \mathbb{R}$,

\begin{center}
\begin{tabular}{cccl}
   $\theta:$ &$\mathcal{P}_d(K)$& $\rightarrow$  & $\mathcal{S} \times \mathbb{R}$ \\
   &  $X$&$\mapsto$&$ \left(\frac{1}{\det(X)^d}X,\log(\det(X))\right)$ ,\\
\end{tabular}
\end{center}
and define the measure $\nu_{tot}$ on $\mathcal{P}_d(K)$ as the product measure between $\nu$ and the Lebesgue measure on $\mathbb{R}$. It can be checked that $\nu_{tot}$ is invariant by the action of $GL_d(K)$. 

Let $f_W$ denote the density of the Wishart distribution with parameters $\Sigma$ and $n$, with respect to $\nu_{tot}$. From the definition of the projective Wishart distribution and the definition of the reference measure $\nu_{tot}$, we have

$$ \int_{A\subset \mathcal{S}} f_{P\mathcal{W}} \mathrm{d}\nu = \int_{\pi^{-1}(A)} f_W \mathrm{d}\nu_{tot} =\int_{x\in A} \int_{\alpha \in \mathbb{R}} f_W(e^{\frac{\alpha}{d}} x)\mathrm{d}\alpha\mathrm{d}\nu,$$

where $\mathrm{d}\alpha$ refers to the Lebesgue measure on $\mathbb{R}$. The factor $\frac{1}{d}$ arise from the $d$-linearity of the determinant. Hence, with the change of variable $\beta=e^{\tfrac{\alpha}{d}}$, 

$$ f_{P\mathcal{W}}(x) = \int_{\mathbb R} f_W(e^{\frac{\alpha}{d}} x)\mathrm{d}\alpha= \int_{\mathbb R_{>0}} f_W(\beta x)\frac{d}{\beta}\mathrm{d}\beta.$$
Now, the Wishart density with respect to $\nu_{tot}$ is given up to a multiplicative constant by

$$ f_{\mathcal{W}}(X) \propto (\det{X})^{\frac{kn}{2}}e^{-\frac{1}{2}\operatorname{tr(\Sigma^{-1}X)}},$$
where $k=1$ when $K=\mathbb{R}$ and $k=2$ when $K=\mathbb{C}$. The calculation in the real case can be found in \cite{Terras}, and the complex case is obtained by the same reasoning. Hence

$$f_{P\mathcal{W}}(x) \propto \int \left(\beta^d\right)^{\frac{kn}{2}} e^{-\frac{\beta}{2}\operatorname{tr(\Sigma^{-1}x)}}\frac{d}{\beta}\mathrm{d}\beta.$$

Set $\gamma = \frac{\beta}{2}\operatorname{tr(\Sigma^{-1}x)}$. The integral becomes

\begin{equation}
\label{eq:denDimD}
 f_{P\mathcal{W}}(x) 
\propto 
\left(\frac{2}{\operatorname{tr(\Sigma^{-1}x)}} \right)^{\frac{dkn}{2}} \int \gamma^{\frac{dkn}{2}} e^{-\gamma}d\frac{\mathrm{d}\gamma}{\gamma} 
\propto 
\left(\frac{2}{\operatorname{tr(\Sigma^{-1}x)}} \right)^{\frac{dkn}{2}}.   
\end{equation}
\vspace{0.2cm}

Note that the formula (\ref{eq:denDimD}) is valid in for any dimension $d$. We have $\operatorname{tr}(\Sigma^{-1} x)= \det(\Sigma)^{-\frac{1}{d}} \operatorname{tr}(\bar\Sigma^{-\tfrac12}x\bar\Sigma^{-\tfrac12})$ and 
since the matrix $\bar\Sigma^{-\tfrac12}x\bar\Sigma^{-\tfrac12}$ is in $S$, there exists  $R\in H$ such that 
$$
R\bar\Sigma^{-\tfrac12}x\bar\Sigma^{-\tfrac12}R^*=\begin{pmatrix}
    \lambda&0\\0&\frac{1}{\lambda}
\end{pmatrix}.
$$

We have then $\operatorname{tr}(\bar \Sigma^{-1}x)=\lambda + \frac{1}{\lambda}$. By Eq.(\ref{eq:dist}) in section 3,
$$
d(x,\bar\Sigma)\propto \|\log(\bar\Sigma^{-\tfrac12}x\bar\Sigma^{-\tfrac12})\|=\|\log(R\bar\Sigma^{-\tfrac12}x\bar\Sigma^{-\tfrac12}R^*)\|=\sqrt 2|\log\lambda|.
$$

Since we can choose the multiplicative factor in Eq.(\ref{eq:dist}), suppose that $d(x,\bar\Sigma)=|\log\lambda|$. We have then
$$\left(\frac{2}{\operatorname{tr(\Sigma^{-1}x)}} \right)^{\frac{dkn}{2}} =  \left(\frac{2(\det \Sigma)^{\frac{1}{2}}}{\lambda + \frac{1}{\lambda}} \right)^{kn} \propto \left( \frac{e^{d(x,\bar \Sigma)}+e^{-d(x,\bar\Sigma)}}{2}\right)^{-kn},$$

which leads to the following theorem.

\begin{theorem}
The distance $d(.,.)$ can be normalized such that for all $x\in \mathcal{S}$, $\Sigma\in \mathcal{P}_2(K)$ and $n\in \mathbb{N}, n>2$,

    $$f_{P\mathcal{W}}(x;\Sigma,n) \propto \cosh(d(x,\bar\Sigma))^{-kn}.$$
\end{theorem}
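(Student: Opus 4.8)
The plan is to specialize the dimension-free density formula (\ref{eq:denDimD}) to $d=2$ and then translate the trace $\operatorname{tr}(\Sigma^{-1}x)$ into the affine-invariant distance $d(x,\bar\Sigma)$. Theorem \ref{th:density} already guarantees that $f_{P\mathcal{W}}$ is a function of $d(x,\bar\Sigma)$ alone, and the equivariance relations $G\cdot\mathcal{W}(\Sigma,n)=\mathcal{W}(G\cdot\Sigma,n)$ and $G\cdot\pi(X)=\pi(G\cdot X)$ noted after that theorem show the radial profile $h_{\Sigma,n}$ does not depend on $\Sigma$; so I only need to identify this profile, and the computation may be carried out for general $\Sigma$ treating $\det(\Sigma)$ as a constant in $x$.

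Next I would exploit the special structure of $\mathcal{S}$ in dimension two. Writing $\Sigma=\det(\Sigma)^{1/d}\bar\Sigma$ gives $\operatorname{tr}(\Sigma^{-1}x)=\det(\Sigma)^{-1/d}\operatorname{tr}(\bar\Sigma^{-1/2}x\bar\Sigma^{-1/2})$, and since $\bar\Sigma^{-1/2}x\bar\Sigma^{-1/2}\in\mathcal{S}$ is a $2\times 2$ positive definite matrix of determinant $1$, its eigenvalues are $\lambda$ and $1/\lambda$ for some $\lambda>0$ and there is $R\in H$ diagonalizing it. Because both the trace and the operator logarithm are invariant under conjugation by $R$, I obtain simultaneously $\operatorname{tr}(\bar\Sigma^{-1/2}x\bar\Sigma^{-1/2})=\lambda+\tfrac{1}{\lambda}$ and, via (\ref{eq:dist}), $d(x,\bar\Sigma)\propto\|\mathrm{diag}(\log\lambda,-\log\lambda)\|=\sqrt2\,|\log\lambda|$. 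Fixing the free multiplicative constant in (\ref{eq:dist}) so that $d(x,\bar\Sigma)=|\log\lambda|$ is precisely the normalization announced in the statement.

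Finally I would assemble the pieces. Substituting into (\ref{eq:denDimD}) with $d=2$ gives $f_{P\mathcal{W}}(x)\propto\big(2/\operatorname{tr}(\Sigma^{-1}x)\big)^{kn}=\big(2\det(\Sigma)^{1/2}/(\lambda+1/\lambda)\big)^{kn}$; since $\det(\Sigma)$ is independent of $x$ it is absorbed into the proportionality constant, and $\tfrac12(\lambda+1/\lambda)=\tfrac12(e^{\log\lambda}+e^{-\log\lambda})=\cosh(\log\lambda)=\cosh(d(x,\bar\Sigma))$ yields the claimed $f_{P\mathcal{W}}(x;\Sigma,n)\propto\cosh(d(x,\bar\Sigma))^{-kn}$.

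I do not expect a genuine obstacle here, as every step is routine linear algebra; the only points demanding care are the bookkeeping of multiplicative constants. One must verify that the $\det(\Sigma)^{1/2}$ prefactor really is constant in $x$ so that it may be dropped, and that the normalization of (\ref{eq:dist}) is chosen consistently as $d(x,\bar\Sigma)=|\log\lambda|$ rather than $\sqrt2\,|\log\lambda|$; an inconsistent choice would rescale the argument of $\cosh$ and spoil the clean form. The hypothesis $n>2$ enters only to guarantee that the underlying Wishart distribution admits a density on $\mathcal{P}_2(K)$, so that the fiber integral over $\beta$ producing (\ref{eq:denDimD}) is a convergent Gamma integral.
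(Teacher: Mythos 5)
Your proposal is correct and follows essentially the same route as the paper: the paper likewise specializes Eq.~(\ref{eq:denDimD}), diagonalizes $\bar\Sigma^{-\frac12}x\bar\Sigma^{-\frac12}$ by some $R\in H$ to obtain eigenvalues $\lambda$ and $1/\lambda$, normalizes the distance so that $d(x,\bar\Sigma)=|\log\lambda|$, and absorbs the $x$-independent $\det(\Sigma)^{\frac12}$ factor into the proportionality constant to reach $\cosh(d(x,\bar\Sigma))^{-kn}$. Your bookkeeping remarks (constancy of the determinant prefactor, the $\sqrt2$ normalization choice) match the paper's own handling of these points.
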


As announced, the function $h_{\Sigma,n}=\cosh^{-kn}$ does not depends on $\Sigma$.

\section{Conclusion}

We exhibited simple links between projective Wishart distributions and the affine invariant distance of positive definite matrices of constant determinant. Our future researches will focus on two aspects. Firstly, we will investigate the convergence of the estimation of the parameters of projective Wishart distributions. Secondly, we will investigate the use of the geometric properties of these distributions in signal processing applications.

%
%
%
%

\end{document}